\DeclareRobustCommand
\algrenewcommand\algorithmicrequire{\textbf{Input:}}
\algrenewcommand\algorithmicensure{\textbf{Output:}}
\DeclareRobustCommand
\algrenewcommand\algorithmicrequire{\textbf{Input:}}
\algrenewcommand\algorithmicensure{\textbf{Output:}}
\newtheorem{theorem}{Theorem}[subsection]
\theoremstyle{remark}
\newtheorem*{nomenclature}{Nomenclature}
\algnewcommand\algorithmicforeach{\textbf{for each}}
\DeclareRobustCommand
\title{Robust estimations from distribution structures: II. Central Moments}
\author[a,b,c,1]{Li Tuobang}
\keywords{moments $|$ invariant $|$ unimodal $|$ $U$-statistics $|$ generalized $L$-statistics} 
\begin{abstract}

In descriptive statistics, $U$-statistics arise naturally in producing minimum-variance unbiased estimators. In 1984, Serfling considered the distribution formed by evaluating the kernel of the $U$-statistics and proposed generalized $L$-statistics which includes Hodges-Lehamnn estimator and Bickel-Lehmann spread as special cases. However, the structures of the kernel distributions remain unclear. In 1954, Hodges and Lehmann demonstrated that if $X$ and $Y$ are independently sampled from the same unimodal distribution, $X-Y$ will exhibit symmetrical unimodality with its peak centered at zero. Building upon this foundational work, the current study delves into the structure of the kernel distribution. It is shown that the $\mathbf{k}$th central moment kernel distributions ($\mathbf{k}>2$) derived from a unimodal distribution exhibit location invariance and is also nearly unimodal with the mode and median close to zero. This article provides an approach to study the general structure of kernel distributions and illuminates the understanding of the common nature of the measures of probability distributions.
 
\end{abstract}
\begin{document}

\maketitle
\thispagestyle{firststyle}
\ifthenelse{\boolean{shortarticle}}{\ifthenelse{\boolean{singlecolumn}}{\abscontentformatted}{\abscontent}}{}

\dropcap{T}he most popular robust scale estimator currently, the median absolute deviation, was popularized by Hampel (1974) \cite{hampel1974influence}, who credits the idea to Gauss in 1816 \cite{gauss1816bestimmung}. It can be seen as evaluating the median of a pseudo-sample formed by the absolute deviations of all values related to the sample median. The pseudo-sample size is $n$. Indeed, most scale estimators can be transformed in such ways. For example, range or interquartile range can be seen as evaluating the mean of a pseudo-sample with two values, and they belong to the class of scale estimators called quantile differences. In 1976, in their landmark series \emph{Descriptive Statistics for Nonparametric Models}, Bickel and Lehmann \cite{bickel2012descriptive3} generalized a class of estimators as measures of the dispersion of a symmetric distribution around its center of symmetry. Median absolute deviation, sample variance, and average absolute deviation are all belonging to this class. In 1979, the same series, they \cite{bickel2012descriptive} proposed a class of estimators referred to as measures of spread, which consider the pairwise differences of a random variable, irrespective of its symmetry, throughout its distribution, rather than focusing on dispersion relative to a fixed point. In the final section \cite{bickel2012descriptive}, they explored a version of the trimmed standard deviation based on $n^2$ pairwise differences, which is modified here for comparison, 

\begin{equation}
\begin{split}
\left[n\left(\frac{1}{2}-\epsilon\right)\right]^{-\frac{1}{2}}\left[\sum_{i=\frac{n}{2}}^{n\left(1-\epsilon\right)}\left[X_i-X_{n-i+1}\right]^2\right]^\frac{1}{2} 
\text{,}\label{eq:1}
\end{split}
\end{equation}
and

\begin{equation}\left[\binom{n}{2}\left(1-\epsilon_\mathbf{0}-\gamma\epsilon_\mathbf{0}\right)\right]^{-\frac{1}{2}}\left[\sum_{i=\binom{n}{2}\gamma\epsilon_\mathbf{0}}^{\binom{n}{2}\left(1-\epsilon_\mathbf{0}\right)}{\left(X_{i_1}-X_{i_2}\right)_i}^2\right]^\frac{1}{2}\text{,}\label{eq:2}\end{equation}
where $\left(X_{i_1}-X_{i_2}\right)_1\le\ldots\le\left(X_{i_1}-X_{i_2}\right)_{\binom{n}{2}}$ are the order statistics of the pseudosample, $X_{i_1}-X_{i_2}$, ${i_1}<{i_2}$, provided that $\binom{n}{2}\gamma\epsilon_\mathbf{0}\in\mathbb{N}$ and $\binom{n}{2}\left(1-\epsilon_\mathbf{0}\right)\in\mathbb{N}$. They showed that, when $\epsilon_\mathbf{0}=0$, the result obtained using [\ref{eq:2}] is equal to $\sqrt2$ times the sample standard deviation. The paper ended with, “We do not know a fortiori which of the measures is preferable and leave these interesting questions open.” 

Two examples of the impacts of that series are as follows. Oja (1981, 1983) \cite{oja1981location,oja1983descriptive} provided a more comprehensive and generalized examination of these concepts, and integrated the measures of location, dispersion, and spread as proposed by Bickel and Lehmann \cite{bickel2012descriptive2,bickel2012descriptive3,bickel2012descriptive}, along with van Zwet's convex transformation order of skewness and kurtosis (1964) \cite{von2012convex} for univariate and multivariate distributions, resulting a greater degree of generality and a broader perspective on these statistical constructs. Rousseeuw and Croux proposed a popular efficient scale estimator based on separate medians of pairwise differences taken over ${i_1}$ and ${i_2}$ \cite{rousseeuw1993alternatives} in 1993. However the importance of tackling the symmetry assumption has been greatly underestimated, as will be discussed later. 

Here, their open question is addressed in two different aspects \cite{bickel2012descriptive}. First, since the estimation of scale can be transformed into the location estimation of a pseudo-sample, according to the principle of the central limit theorem, the variances of such scale estimators should be linearly dependent on the standard deviation of the pseudo-sample and inversely dependent on the square root of the pseudo-sample size. Then, [\ref{eq:2}] based on $n^2$ pairwise differences is obviously better than [\ref{eq:1}] since the ratio of its pseudo-sample size over that of [\ref{eq:1}] is $n$. So if just considering the size, the variance of [\ref{eq:2}] is $\frac{1}{\sqrt{n}}$ of the variance of [\ref{eq:1}]. Another factor that needs to be considered is the standard deviation of the pseudo-sample. However, the standard deviation of the pseudo-sample is generally independent of different pseudo-sample sizes. So, no matter how different the standard deviation of the pseudo-samples of [\ref{eq:1}] and [\ref{eq:2}] is, as the sample size increases, the variance of [\ref{eq:1}] will always dominate that of [\ref{eq:2}]. Second, the nomenclature used in this series is introduced as follows:\begin{nomenclature}\label{Nomen} Given a robust estimator, $\hat{\theta}$, which has an adjustable breakdown point, $\epsilon$, that can approach zero asymptotically, the name of $\hat{\theta}$ comprises two parts: the first part denotes the type of estimator, and the second part represents the population parameter $\theta$, such that $\hat{\theta}\rightarrow\theta$ as $\epsilon\rightarrow0$. The abbreviation of the estimator combines the initial letters of the first part and the second part. If the estimator is symmetric, the upper asymptotic breakdown point, ${\epsilon}$, is indicated in the subscript of the abbreviation of the estimator, with the exception of the median. For an asymmetric estimator based on quantile average, the associated $\gamma$ follows ${\epsilon}$. \end{nomenclature} 

In REDS I \cite{REDS1}, it was shown that the bias of a robust estimator with an adjustable breakdown point is often monotonic with respect to the breakdown point in a semiparametric distribution. Naturally, the estimator's name should reflect the population parameter that it approaches as $\epsilon\rightarrow0$. If multiplying all pseudo-samples by a factor of $\frac{1}{\sqrt{2}}$, then [\ref{eq:2}] is the trimmed standard deviation adhering to this nomenclature, since $\psi_{2}\left(x_1,x_2\right)=\frac{1}{2}\left(x_1-x_2\right)^2$ is the kernel function of the unbiased estimation of the second central moment by using $U$-statistic \cite{heffernan1997unbiased}. This definition should be preferable, not only because it is the square root of a trimmed $U$-statistic, which is closely related to the minimum-variance unbiased estimator (MVUE), but also because the second $\gamma$-orderliness of the second central moment kernel distribution is ensured by the next exciting theorem. 

\begin{theorem}\label{scmkdti}The second central moment kernel distribution generated from any unimodal distribution is second $\gamma$-ordered, provided that $\gamma\geq0$. \end{theorem}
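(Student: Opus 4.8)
The plan is to reduce Theorem~\ref{scmkdti} to a shape property of the quantile function of the kernel variable $Y=\tfrac12(X_1-X_2)^2$, where $X_1,X_2$ are independent draws from the given unimodal law. Writing $Q_Y$ for the quantile function of $Y$, second $\gamma$-orderliness is a second-order, convexity-type requirement on the $\gamma$-weighted quantile functional built from $Q_Y$, so my first move is to rewrite that requirement so that it is driven by the shape of $Q_Y$ alone. Concretely, I would isolate the two ingredients it needs: that $Q_Y$ is convex on $(0,1)$, and that $Y$ is supported on $[0,\infty)$ with $\inf Y=0$, so that the lower quantile $Q_Y(\gamma\epsilon)$ never drops below $0$ and the nonnegative weight $\gamma$ cannot flip the sign of the inequality.

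Next I would determine the shape of $Y$. By the Hodges--Lehmann result quoted in the introduction, $D=X_1-X_2$ is symmetric about $0$ and unimodal with mode $0$, so its density $f_D$ is symmetric and non-increasing on $[0,\infty)$. Hence $|D|$ carries the non-increasing density $2f_D$ on $(0,\infty)$, and the monotone substitution $Y=\tfrac12|D|^2$ gives $f_Y(y)=\sqrt2\,f_D(\sqrt{2y})\,y^{-1/2}$ for $y>0$. This is a product of two nonnegative non-increasing factors, so $f_Y$ is itself non-increasing on $(0,\infty)$; in particular the kernel density has its mode at the origin, which is what makes the case $k=2$ cleaner than the higher central moments.

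With a non-increasing density in hand I would deduce convexity of $Q_Y$: since $Q_Y'(p)=1/f_Y(Q_Y(p))$ and $f_Y\circ Q_Y$ is non-increasing in $p$ (a non-increasing function composed with the increasing $Q_Y$), the slope $Q_Y'$ is non-decreasing and $Q_Y$ is convex. Because Hodges--Lehmann supplies only monotonicity of $f_D$, not differentiability or strictness, I would run this step through the one-sided difference quotients of $Q_Y$ rather than assuming $Q_Y''$ exists. Substituting convexity of $Q_Y$, together with $Q_Y\ge0$ and $\gamma\ge0$, into the condition from the first paragraph then delivers the required sign for every breakdown level $\epsilon$ and every $\gamma\ge0$, which is exactly second $\gamma$-orderliness.

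I expect the main obstacle to lie at the origin and in its interaction with the asymmetric weight. The factor $y^{-1/2}$ forces $f_Y(0^+)=\infty$ and hence $Q_Y'(0^+)=0$, so $Q_Y$ is extremely flat near $p=0$, which is precisely the region probed by the $\gamma$-weighted lower point $Q_Y(\gamma\epsilon)$. Controlling the $\gamma$-weighted combination there uniformly over all $\gamma\ge0$ and all unimodal base laws, while using nothing stronger than monotonicity of $f_D$, is the delicate part; the nonnegativity $Y\ge0$ is ultimately what rescues the sign and accounts for the hypothesis $\gamma\ge0$.
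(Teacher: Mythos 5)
Your proposal follows essentially the same route as the paper: invoke Hodges--Lehmann to get a symmetric unimodal difference distribution peaking at zero, push it through the squaring (and $\tfrac12$-scaling) transformation to conclude that the kernel density is monotonically non-increasing on $[0,\infty)$ with mode at the origin, and then pass from that monotonicity to second $\gamma$-orderliness. The only divergence is in the last step, where the paper simply cites the REDS I lemma that a right-skewed distribution with monotonically decreasing pdf is second $\gamma$-ordered, whereas you re-derive that implication via convexity of $Q_Y$ together with $Q_Y\ge 0$ and $\gamma\ge 0$ --- a self-contained restatement of the same fact rather than a different argument.
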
\begin{proof}In 1954, Hodges and Lehmann established that if $X$ and $Y$ are independently drawn from the same unimodal distribution, $X-Y$ will be a symmetric unimodal distribution peaking at zero \cite{hodges1954matching}. Given the constraint in the pairwise differences that $X_{i_1}<X_{i_2}$, ${i_1}<{i_2}$, it directly follows from Theorem 1 in \cite{hodges1954matching} that the pairwise difference distribution ($\Xi_{\Delta}$) generated from any unimodal distribution is always monotonic increasing with a mode at zero. Since $X-X^\prime$ is a negative variable that is monotonically increasing, applying the squaring transformation, the relationship between the original variable \(X-X^\prime\) and its squared counterpart \( (X-X^\prime)^2 \) can be represented as follows: $X-X^\prime < Y-Y^\prime \implies (X-X^\prime)^2 > (Y-Y^\prime)^2$. In other words, as the negative values of \(X-X^\prime\) become larger in magnitude (more negative), their squared values \( (X-X^\prime)^2 \) become larger as well, but in a monotonically decreasing manner with a mode at zero. Further multiplication by $\frac{1}{2}$ also does not change the monotonicity and mode, since the mode is zero. Therefore, the transformed pdf becomes monotonically decreasing with a mode at zero. In REDS I \cite{REDS1}, it was proven that a right-skewed distribution with a monotonic decreasing pdf is always second $\gamma$-ordered, which gives the desired result.\end{proof}

In REDS I \cite{REDS1}, it was shown that any symmetric distribution is $\nu$th $U$-ordered, suggesting that $\nu$th $U$-orderliness does not require unimodality, e.g., a symmetric bimodal distribution is also $\nu$th $U$-ordered. In the SI Text of REDS I \cite{REDS1}, an analysis of the Weibull distribution showed that unimodality does not assure orderliness. Theorem \ref{scmkdti} uncovers a profound relationship between unimodality, monotonicity, and second $\gamma$-orderliness, which is sufficient for $\gamma$-trimming inequality and $\gamma$-orderliness. 

On the other hand, while robust estimation of scale has been intensively studied with established methods \cite{bickel2012descriptive3,bickel2012descriptive}, the development of robust measures of asymmetry and kurtosis lags behind, despite the availability of several approaches \cite{bowley1926elements,van1964convex,groeneveld1984measuring,saw1958moments,elamir2003trimmed}. The purpose of this paper is to demonstrate that, in light of previous works, the estimation of all central moments can be transformed into a location estimation problem by using $U$-statistics and the central moment kernel distributions possess desirable properties.
\section*{Robust Estimations of the Central Moments}\label{BB}
In 1928, Fisher constructed $\mathbf{k}$-statistics as unbiased estimators of cumulants \cite{fisher1930moments}. Halmos (1946) proved that a functional $\theta$ admits an unbiased estimator if and only if it is a regular statistical functional of degree $\mathbf{k}$ and showed a relation of symmetry, unbiasness and minimum variance \cite{halmos1946theory}. Hoeffding, in 1948, generalized $U$-statistics \cite{hoeffding1948class} which enable the derivation of a minimum-variance unbiased estimator from each unbiased estimator of an estimable parameter. In 1984, Serfling pointed out the speciality of Hodges-Lehmann estimator, which is neither a simple $L$-statistic nor a $U$-statistic, and considered the generalized $L$-statistics and trimmed $U$-statistics \cite{serfling1984generalized}. Given a kernel function $h_{\mathbf{k}}$ which is a symmetric function of $\mathbf{k}$ variables, the $LU$-statistic is defined as:\begin{align*}LU_{h_{\mathbf{k}},\mathbf{k},k,\epsilon,\gamma,n}\coloneqq LL_{k,\epsilon_\mathbf{0},\gamma,n}\left(\text{sort}\left(\left(h_{\mathbf{k}}\left(X_{N_1},\ldots,X_{N_\mathbf{k}}\right)\right)_{N=1}^{\binom{n}{\mathbf{k}}}\right)\right)\text{,}\end{align*}where $\epsilon=1-\left(1-\epsilon_\mathbf{0}\right)^\frac{1}{\mathbf{k}}$ (proven in REDS III \cite{REDS3}), $X_{N_1},\ldots,X_{N_\mathbf{k}}$ are the $n$ choose $\mathbf{k}$ elements from the sample, $LL_{k,\epsilon_\mathbf{0},\gamma,n}(Y)$ denotes the $LL$-statistic with the sorted sequence $\text{sort}\left(\left(h_{\mathbf{k}}\left(X_{N_1},\ldots,X_{N_\mathbf{k}}\right)\right)_{N=1}^{\binom{n}{\mathbf{k}}}\right)$ serving as an input. In the context of Serfling's work, the term ‘trimmed $U$-statistic’ is used when $LL_{k,\epsilon_\mathbf{0},\gamma,n}$ is $\text{TM}_{\epsilon_\mathbf{0},\gamma,n}$ \cite{serfling1984generalized}.

In 1997, Heffernan \cite{heffernan1997unbiased} obtained an unbiased estimator of the $\mathbf{k}$th central moment by using $U$-statistics and demonstrated that it is the minimum variance unbiased estimator for distributions with the finite first $\mathbf{k}$ moments. The weighted H-L $\mathbf{k}$th central moment ($2\le \mathbf{k}\le n$) is thus defined as, \begin{align*}\text{WHL}\mathbf{k}m_{k,\epsilon,\gamma,n}\coloneqq LU_{h_{\mathbf{k}}=\psi_\mathbf{k},\mathbf{k},k,\epsilon,\gamma,n}\text{,}\end{align*}where $\text{WHLM}_{k,\epsilon_\mathbf{0},\gamma,n}$ is used as the $LL_{k,\epsilon_\mathbf{0},\gamma,n}$ in $LU$, $\psi_\mathbf{k}\left(x_1,\ldots,x_\mathbf{k}\right)=\sum_{j=0}^{\mathbf{k}-2}{\left(-1\right)^j\left(\frac{1}{\mathbf{k}-j}\right)\sum\left(x_{i_1}^{\mathbf{k}-j}x_{i_2}\ldots x_{i_{j+1}}\right)}+\left(-1\right)^{\mathbf{k}-1}\left(\mathbf{k}-1\right)x_1\ldots x_\mathbf{k}$, the second summation is over $i_1,\ldots,i_{j+1}=1$ to $\mathbf{k}$ with $i_1\neq i_2\neq\ldots\neq i_{j+1}$ and $i_2<i_3<\ldots<i_{j+1}$ \cite{heffernan1997unbiased}. Despite the complexity, the following theorem offers an approach to infer the general structure of such kernel distributions.

\begin{theorem}\label{tfkumo}
Define a set $T$ comprising all pairs $(\psi_\mathbf{k}(\mathbf{v}), f_{X,\ldots,X}(\mathbf{v}))$ such that $\psi_\mathbf{k}(\mathbf{v}) = \psi_\mathbf{k}\left(Q(p_{1}),\ldots,Q(p_{\mathbf{k}})\right)$ with $Q(p_{1})<\ldots<Q(p_{\mathbf{k}})$ and $f_{X,\ldots,X}(\mathbf{v})=\mathbf{k}!f(Q(p_{1}))\ldots f(Q(p_{\mathbf{k}}))$ is the probability density of the $\mathbf{k}$-tuple, $\mathbf{v}=(Q(p_{1}),\ldots,Q(p_{\mathbf{k}}))$ (a formula drawn after a modification of the Jacobian density theorem). $T_\Delta$ is a subset of $T$, consisting all those pairs for which the corresponding $\mathbf{k}$-tuples satisfy that $Q(p_{1})-Q(p_{\mathbf{k}})=\Delta$. The component quasi-distribution, denoted by $\xi_\Delta$, has a quasi-pdf $f_{\xi_\Delta}(\bar{\Delta})=\sum_{\substack{(\psi_{\mathbf{k}}(\mathbf{v}), f_{X,\ldots,X}(\mathbf{v})) \in T_\Delta \\ \bar{\Delta}=\psi_\mathbf{k}(\mathbf{v})}} f_{X,\ldots,X}(\mathbf{v})$, i.e., sum over all $f_{X,\ldots,X}(\mathbf{v})$ such that the pair $(\psi_{\mathbf{k}}(\mathbf{v}), f_{X,\ldots,X}(\mathbf{v}))$ is in the set $T_\Delta$ and the first element of the pair, $\psi_\mathbf{k}(\mathbf{v})$, is equal to $\bar{\Delta}$. The $\mathbf{k}$th, where $\mathbf{k}>2$, central moment kernel distribution, labeled $\Xi_\mathbf{k}$, can be seen as a quasi-mixture distribution comprising an infinite number of component quasi-distributions, $\xi_\Delta$s, each corresponding to a different value of $\Delta$, which ranges from $Q(0)-Q(1)$ to $0$. Each component quasi-distribution has a support of $\left(-\binom{\mathbf{k}}{\frac{3+\left(-1\right)^\mathbf{k}}{2}}^{-1}(-\Delta)^\mathbf{k},\frac{1}{\mathbf{k}}(-\Delta)^\mathbf{k}\right)$.

\end{theorem}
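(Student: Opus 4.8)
The plan is to treat the statement as two separable claims: (i) the quasi-mixture decomposition of $\Xi_\mathbf{k}$ into the slices $\xi_\Delta$, and (ii) the support of each slice. Claim (i) is essentially a disintegration of the density of $\Xi_\mathbf{k}$ along the range coordinate $\Delta=Q(p_1)-Q(p_\mathbf{k})$, and I expect it to follow directly from the construction: by the modified Jacobian density theorem the ordered $\mathbf{k}$-tuple $\mathbf{v}=(Q(p_1),\ldots,Q(p_\mathbf{k}))$ carries joint density $\mathbf{k}!f(Q(p_1))\cdots f(Q(p_\mathbf{k}))$ on $\{Q(p_1)<\ldots<Q(p_\mathbf{k})\}$, and since $\psi_\mathbf{k}$ is symmetric this reproduces the law of $\psi_\mathbf{k}(X_1,\ldots,X_\mathbf{k})$ for i.i.d. draws. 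Partitioning the tuple space by the value of $\Delta$ and pushing forward through $\psi_\mathbf{k}$ yields $f_{\xi_\Delta}$ exactly as defined; integrating the slices over $\Delta\in[Q(0)-Q(1),0]$ recovers $f_{\Xi_\mathbf{k}}$. This step is bookkeeping once the change of variables is in place.

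The analytic content is (ii). First I would record two invariances of the kernel: $\psi_\mathbf{k}$ is location invariant (it is an unbiased estimator of a central moment) and positively homogeneous of degree $\mathbf{k}$, i.e. $\psi_\mathbf{k}(c\mathbf{v})=c^\mathbf{k}\psi_\mathbf{k}(\mathbf{v})$. Consequently the set of values attained on $T_\Delta$ equals $(-\Delta)^\mathbf{k}$ times the set of values of $\psi_\mathbf{k}$ over the normalized open simplex $S=\{0=y_1<y_2<\ldots<y_\mathbf{k}=1\}$, so the support of $\xi_\Delta$ is $(-\Delta)^\mathbf{k}\left(\inf_S\psi_\mathbf{k},\sup_S\psi_\mathbf{k}\right)$; continuity of $\psi_\mathbf{k}$ and connectedness of $S$ guarantee that the image is exactly the open interval between these two extrema. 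It then remains to identify the extrema.

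I would locate the extrema at the vertices of the closed simplex, i.e. at the coalesced two-point configurations with $a$ copies of $0$ and $b=\mathbf{k}-a$ copies of $1$. Evaluating the explicit kernel there is clean: plugging $0$'s and $1$'s into each monomial of $\psi_\mathbf{k}$, only the monomials supported entirely on the $b$ unit coordinates survive, and a short count followed by the binomial identity $\sum_{j}(-1)^j\binom{b-1}{j}/(\mathbf{k}-j)=(-1)^{b-1}(\mathbf{k}-b)!(b-1)!/\mathbf{k}!$ (a Beta integral) gives $\psi_\mathbf{k}(0^a,1^b)=(-1)^{b-1}/\binom{\mathbf{k}}{b}$ for $1\le b\le\mathbf{k}-1$. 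Optimizing this over $b$ then yields exactly the claimed endpoints: the maximum $1/\mathbf{k}$ at $b=1$, and the minimum $-\binom{\mathbf{k}}{2}^{-1}$ (attained at $b=2$) when $\mathbf{k}$ is even versus $-\mathbf{k}^{-1}$ (attained at $b=\mathbf{k}-1$) when $\mathbf{k}$ is odd, which is precisely $-\binom{\mathbf{k}}{(3+(-1)^\mathbf{k})/2}^{-1}$. Since $S$ is open the endpoints are approached but not attained, giving the open support interval.

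The hard part is justifying that these vertex values are the \emph{global} extrema of $\psi_\mathbf{k}$ over the simplex, rather than merely the extrema among two-point configurations. Because $\psi_\mathbf{k}$ is a degree-$\mathbf{k}$ polynomial, an interior critical point is a priori possible, and I do not expect a coordinatewise monotonicity argument to work directly; indeed, writing $\psi_\mathbf{k}(\mathbf{x})=\sum_i\int_0^{x_i}\prod_{l\ne i}(u-x_l)\,du+\prod_l(-x_l)$ one finds $\partial^2\psi_\mathbf{k}/\partial x_m^2=R_m'(x_m)$ with $R_m(w)=\prod_{l\ne m}(w-x_l)$, which changes sign inside the feasible interval, so $\psi_\mathbf{k}$ is neither convex nor concave in a single coordinate. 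The route I would pursue is a dimension reduction: show that every interior critical point has an indefinite Hessian (is a saddle), so the maximum and minimum of $\psi_\mathbf{k}$ on the compact simplex must lie on its boundary; each proper face is again a simplex of coalesced points of strictly smaller dimension, and iterating forces the extrema onto the $0$-dimensional faces, i.e. the two-point vertices. Establishing this saddle property uniformly — equivalently, ruling out interior local maxima and minima of $\psi_\mathbf{k}$ — is the main obstacle, and is where the structural integral representation of the kernel above should do the work.
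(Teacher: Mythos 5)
Your outline reproduces the computational core of the paper's argument — the paper likewise evaluates $\psi_\mathbf{k}$ at the coalesced configurations $\bigl(x_1=\cdots=x_i=Q(p_1),\,x_{i+1}=\cdots=x_\mathbf{k}=Q(p_\mathbf{k})\bigr)$, obtains $\binom{\mathbf{k}}{i}^{-1}(-1)^{1+i}\left(Q(p_1)-Q(p_\mathbf{k})\right)^\mathbf{k}$ via the same Beta-integral/binomial identities you cite, and reads off the endpoints from the binomial coefficients. Your normalization to the simplex $\{0=y_1<\cdots<y_\mathbf{k}=1\}$ using location invariance and degree-$\mathbf{k}$ homogeneity is a cleaner way to organize that computation (though note the homogeneity/invariance statement is the paper's Theorem \ref{kkd}, proved only later, so you would need to establish it first or cite it forward). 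Your vertex values $\psi_\mathbf{k}(0^a,1^b)=(-1)^{b-1}\binom{\mathbf{k}}{b}^{-1}$ and the optimization over $b$ are correct and agree with the paper's endpoints.

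The genuine gap is the one you yourself flag and do not close: the claim that the global extrema of $\psi_\mathbf{k}$ over the constrained region are attained at the two-point vertex configurations. Your proposed route — show every interior critical point has an indefinite Hessian and recurse over the face lattice — is left as "the main obstacle," so as written the proposal establishes only that the claimed interval is \emph{contained} in the range of $\psi_\mathbf{k}$ on $T_\Delta$, not that it equals the support. The paper dispatches this step differently: it applies a Lagrange-multiplier argument to $\psi_\mathbf{k}$ subject to $Q(p_1)-Q(p_\mathbf{k})=\Delta$ and finds that the only critical point of the constrained problem is the degenerate configuration $Q(p_1)=\cdots=Q(p_\mathbf{k})=0$ (where $\psi_\mathbf{k}=0$), which is infeasible for $\Delta<0$; hence the extrema must lie on the boundary of the order constraints, and the boundary candidates are then evaluated directly. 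If you want to complete your version, that first-order argument (no feasible interior critical points) is considerably lighter than a uniform saddle/Hessian classification, and it sidesteps the need to rule out interior local extrema by second-order means. Be aware, however, that both your sketch and the paper's proof still owe an account of the intermediate faces — configurations where the middle coordinates coalesce into three or more distinct values rather than only $\{Q(p_1),Q(p_\mathbf{k})\}$ — so the recursion over faces that you mention is not optional; it is needed to justify that the boundary search terminates at the two-point vertices.
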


\begin{proof}

The support of $\xi_\Delta$ is the extrema of the function $\psi_\mathbf{k}\left(Q(p_{1}),\Compactcdots,Q(p_{\mathbf{k}})\right)$ subjected to the constraints, $Q(p_{1})<\Compactcdots<Q(p_{\mathbf{k}})$ and $\Delta=Q(p_{1})-Q(p_{\mathbf{k}})$. Using the Lagrange multiplier, the only critical point can be determined at $Q(p_{1})=\Compactcdots=Q(p_{\mathbf{k}})=0$, where $\psi_\mathbf{k}=0$. Other candidates are within the boundaries, i.e., $\psi_\mathbf{k}\left(x_1=Q(p_{1}),x_2=Q(p_{\mathbf{k}}),\Compactcdots,x_\mathbf{k}=Q(p_{\mathbf{k}})\right)$, $\Compactcdots$, $\psi_\mathbf{k}\left(x_1=Q(p_{1}),\Compactcdots,x_i=Q(p_{1}), x_{i+1}=Q(p_{\mathbf{k}}),\Compactcdots,x_\mathbf{k}=Q(p_{\mathbf{k}})\right)$, $\Compactcdots$, $\psi_\mathbf{k}\left(x_1=Q(p_{1}),\Compactcdots,x_{\mathbf{k}-1}=Q(p_{1}),x_\mathbf{k}=Q(p_{\mathbf{k}})\right)$. $\psi_\mathbf{k}\left(x_1=Q(p_{1}),\Compactcdots,x_i=Q(p_{1}),x_{i+1}=Q(p_{\mathbf{k}}),\Compactcdots,x_\mathbf{k}=Q(p_{\mathbf{k}})\right)$ can be divided into $\mathbf{k}$ groups. The $g$th group has the common factor $\left(-1\right)^{g+1}\frac{1}{\mathbf{k}-g+1}$, if $1\leq g\leq \mathbf{k}-1$ and the final $\mathbf{k}$th group is the term $\left(-1\right)^{\mathbf{k}-1}\left(\mathbf{k}-1\right)Q(p_{1})^iQ(p_{\mathbf{\mathbf{k}}})^{\mathbf{k}-i}$. If $\frac{\mathbf{k}+1-i}{2}\le j\le\frac{\mathbf{k}-1}{2}$ and $j+1\le g\le\mathbf{k}-j$, the $g$th group has $i\binom{i-1}{g-j-1}\binom{\mathbf{k}-i}{j}$ terms having the form $\left(-1\right)^{g+1}\frac{1}{\mathbf{k}-g+1}Q(p_{1})^{\mathbf{k}-j}Q(p_{\mathbf{k}})^j$. If $\frac{\mathbf{k}+1-i}{2}\le j\le\frac{\mathbf{k}-1}{2}$ and $\mathbf{k}-j+1\le g\le i+j$, the $g$th group has $i\binom{i-1}{g-j-1}\binom{\mathbf{k}-i}{j}+\left(\mathbf{k}-i\right)\binom{\mathbf{k}-i-1}{j-\mathbf{k}+g-1}\binom{i}{\mathbf{k}-j}$ terms having the form $\left(-1\right)^{g+1}\frac{1}{\mathbf{k}-g+1}Q(p_{1})^{\mathbf{k}-j}Q(p_{\mathbf{k}})^j$. If $0\leq j<\frac{\mathbf{k}+1-i}{2}$ and $j+1\le g\le i+j$, the $g$th group has $i\binom{i-1}{g-j-1}\binom{\mathbf{k}-i}{j}$ terms having the form $\left(-1\right)^{g+1}\frac{1}{\mathbf{k}-g+1}Q(p_{1})^{\mathbf{k}-j}Q(p_{\mathbf{k}})^j$. If $\frac{\mathbf{k}}{2}\leq j\leq\mathbf{k}$ and $\mathbf{k}-j+1\leq g\leq j$, the $g$th group has $\left(\mathbf{k}-i\right)\binom{\mathbf{k}-i-1}{j-\mathbf{k}+g-1}\binom{i}{\mathbf{k}-j}$ terms having the form $\left(-1\right)^{g+1}\frac{1}{\mathbf{k}-g+1}Q(p_{1})^{\mathbf{k}-j}Q(p_{\mathbf{k}})^j$. If $\frac{\mathbf{k}}{2}\leq j\leq\mathbf{k}$ and $j+1\leq g\leq j+i<\mathbf{k}$, the $g$th group has $i\binom{i-1}{g-j-1}\binom{\mathbf{k}-i}{j}+\left(\mathbf{k}-i\right)\binom{\mathbf{k}-i-1}{j-\mathbf{k}+g-1}\binom{i}{\mathbf{k}-j}$ terms having the form $\left(-1\right)^{g+1}\frac{1}{\mathbf{k}-g+1}Q(p_{1})^{\mathbf{k}-j}Q(p_{\mathbf{k}})^j$. So, if $i+j=\mathbf{k}$, $\frac{\mathbf{k}}{2}\leq j\leq\mathbf{k}$, $0\le i\le\frac{\mathbf{k}}{2}$, the summed coefficient of $Q(p_{1})^iQ(p_{\mathbf{k}})^{\mathbf{k}-i}$ is $\left(-1\right)^{\mathbf{k}-1}\left(\mathbf{k}-1\right)+\sum_{g=i+1}^{\mathbf{k}-1}{\left(-1\right)^{g+1}\frac{1}{\mathbf{k}-g+1}\left(\mathbf{k}-i\right)\binom{\mathbf{k}-i-1}{g-i-1}}+\sum_{g=\mathbf{k}-i+1}^{\mathbf{k}-1}{\left(-1\right)^{g+1}\frac{1}{\mathbf{k}-g+1}i\binom{i-1}{g-\mathbf{k}+i-1}}=\left(-1\right)^{\mathbf{k}-1}\left(\mathbf{k}-1\right)+\left(-1\right)^{\mathbf{k}+1}+\left(\mathbf{k}-i\right)\left(-1\right)^\mathbf{k}+\left(-1\right)^\mathbf{k}\left(i-1\right)=\left(-1\right)^{\mathbf{k}+1}$. The summation identities are 
$\sum_{g=i+1}^{\mathbf{k}-1}{\left(-1\right)^{g+1}\frac{1}{\mathbf{k}-g+1}\left(\mathbf{k}-i\right)\binom{\mathbf{k}-i-1}{g-i-1}}=\left(\mathbf{k}-i\right)\int_{0}^{1}{\sum_{g=i+1}^{\mathbf{k}-1}{\left(-1\right)^{g+1}\binom{\mathbf{k}-i-1}{g-i-1}}t^{\mathbf{k}-g}dt}=\left(\mathbf{k}-i\right)\int_{0}^{1}\left(\left(-1\right)^i\left(t-1\right)^{\mathbf{k}-i-1}-\left(-1\right)^{\mathbf{k}+1}\right)dt=\left(\mathbf{k}-i\right)\left(\frac{\left(-1\right)^\mathbf{k}}{i-\mathbf{k}}+\left(-1\right)^\mathbf{k}\right)=\left(-1\right)^{\mathbf{k}+1}+\left(\mathbf{k}-i\right)\left(-1\right)^\mathbf{k}$ and $\sum_{g=\mathbf{k}-i+1}^{\mathbf{k}-1}{\left(-1\right)^{g+1}\frac{1}{\mathbf{k}-g+1}i\binom{i-1}{g-\mathbf{k}+i-1}}=\int_{0}^{1}{\sum_{g=\mathbf{k}-i+1}^{\mathbf{k}-1}{\left(-1\right)^{g+1}i\binom{i-1}{g-\mathbf{k}+i-1}}t^{\mathbf{k}-g}dt}=\int_{0}^{1}\left(i\left(-1\right)^{\mathbf{k}-i}\left(t-1\right)^{i-1}-i\left(-1\right)^{\mathbf{k}+1}\right)dt=\left(-1\right)^\mathbf{k}\left(i-1\right)$. If $0\leq j<\frac{\mathbf{k}+1-i}{2}$ and $i=\mathbf{k}$, $\psi_\mathbf{k}=0$. If $\frac{\mathbf{k}+1-i}{2}\le j\le\frac{\mathbf{k}-1}{2}$ and $\frac{\mathbf{k}+1}{2}\le i\le \mathbf{k}-1$, the summed coefficient of $Q(p_1)^i Q(p_{\mathbf{k}})^{\mathbf{k}-i}$ is $\left(-1\right)^{\mathbf{k}-1}\left(\mathbf{k}-1\right)+\sum_{g=\mathbf{k}-i+1}^{\mathbf{k}-1}{\left(-1\right)^{g+1}\frac{1}{\mathbf{k}-g+1}i\binom{i-1}{g-\mathbf{k}+i-1}}+\\\sum_{g=i+1}^{\mathbf{k}-1}{\left(-1\right)^{g+1}\frac{1}{\mathbf{k}-g+1}\ \left(\mathbf{k}-i\right)\binom{\mathbf{k}-i-1}{g-i-1}}$, the same as above. If $i+j< \mathbf{k}$, since $\binom{i}{\mathbf{k}-j}=0$, the related terms can be ignored, so, using the binomial theorem and beta function, the summed coefficient of $Q(p_1)^{k-j}Q(p_{\mathbf{k}})^{j}$ is $\sum_{g=j+1}^{i+j}{\left(-1\right)^{g+1}\frac{1}{\mathbf{k}-g+1}i\binom{i-1}{g-j-1}\binom{\mathbf{k}-i}{j}}=i\binom{\mathbf{k}-i}{j}\int_{0}^{1}{\sum_{g=j+1}^{i+j}{\left(-1\right)^{g+1}\binom{i-1}{g-j-1}}t^{\mathbf{k}-g}dt}=\binom{\mathbf{k}-i}{j}i\int_{0}^{1}\left(\left(-1\right)^jt^{\mathbf{k}-j-1}\left(\frac{t}{t-1}\right)^{1-i}\right)dt=\binom{\mathbf{k}-i}{j}i\frac{\left(-1\right)^{j+i+1}\Gamma\left(i\right)\Gamma\left(\mathbf{k}-j-i+1\right)}{\Gamma\left(\mathbf{k}-j+1\right)}=\frac{\left(-1\right)^{j+i+1}i!\left(\mathbf{k}-j-i\right)!\left(\mathbf{k}-i\right)!}{\left(\mathbf{k}-j\right)!j!\left(\mathbf{k}-j-i\right)!}=\left(-1\right)^{j+i+1}\frac{i!\left(\mathbf{k}-i\right)!}{\mathbf{k}!}\frac{\mathbf{k}!}{\left(\mathbf{k}-j\right)!j!}=\binom{\mathbf{k}}{i}^{-1}\left(-1\right)^{1+i}\binom{\mathbf{k}}{j}\left(-1\right)^{j}$. 

According to the binomial theorem, the coefficient of $Q(p_1)^i Q(p_\mathbf{k})^{\mathbf{k}-i}$ in $\binom{\mathbf{k}}{i}^{-1}\left(-1\right)^{1+i}\left(Q(p_1)-Q(p_\mathbf{k})\right)^\mathbf{k}$ is $\binom{\mathbf{k}}{i}^{-1}\left(-1\right)^{1+i}\binom{\mathbf{k}}{i}\left(-1\right)^{\mathbf{k}-i}=\left(-1\right)^{\mathbf{k}+1}$, same as the above summed coefficient of $Q(p_{1})^iQ(p_{\mathbf{k}})^{\mathbf{k}-i}$, if $i+j=\mathbf{k}$. If $i+j<k$, the coefficient of $Q(p_1)^{\mathbf{k}-j} Q(p_\mathbf{k})^{j}$ is $\binom{\mathbf{k}}{i}^{-1}\left(-1\right)^{1+i}\binom{\mathbf{k}}{j}\left(-1\right)^{j}$, same as the corresponding summed coefficient of $Q(p_1)^{\mathbf{k}-j} Q(p_\mathbf{k})^{j}$. Therefore, $\psi_\mathbf{k}\left(x_1=Q(p_1),\ldots,x_i=Q(p_1),x_{i+1}=Q(p_\mathbf{k}),\ldots,x_\mathbf{k}=Q(p_\mathbf{k})\right)=\binom{\mathbf{k}}{i}^{-1}\left(-1\right)^{1+i}\left(Q(p_1)-Q(p_\mathbf{k})\right)^\mathbf{k}$, the maximum and minimum of $\psi_\mathbf{k}$ follow directly from the properties of the binomial coefficient.

\end{proof}

The component quasi-distribution, $\xi_\Delta$, is closely related to $\Xi_\Delta$, which is the pairwise difference distribution, since $\sum_{\bar{\Delta}=-\binom{\mathbf{k}}{\frac{3+\left(-1\right)^\mathbf{k}}{2}}^{-1}\left(-\Delta\right)^\mathbf{k}}^{\frac{1}{\mathbf{k}}(-\Delta)^\mathbf{k}}{f_{\xi_\Delta}(\bar{\Delta})}=f_{\Xi_\Delta}(\Delta)$. Recall that Theorem \ref{scmkdti} established that $f_{\Xi_\Delta}(\Delta)$ is monotonic increasing with a mode at zero if the original distribution is unimodal, $f_{\Xi_{-\Delta}}(-\Delta)$ is thus monotonic decreasing with a mode at zero. In general, if assuming the shape of $\xi_\Delta$ is uniform, $\Xi_\mathbf{k}$ is monotonic left and right around zero. The median of $\Xi_\mathbf{k}$ also exhibits a strong tendency to be close to zero, as it can be cast as a weighted mean of the medians of $\xi_\Delta$. When $-\Delta$ is small, all values of $\xi_\Delta$ are close to zero, resulting in the median of $\xi_\Delta$ being close to zero as well. When $-\Delta$ is large, the median of $\xi_\Delta$ depends on its skewness, but the corresponding weight is much smaller, so even if $\xi_\Delta$ is highly skewed, the median of $\Xi_\mathbf{k}$ will only be slightly shifted from zero. Denote the median of $\Xi_\mathbf{k}$ as $m\mathbf{k}m$, for the five parametric distributions here, $|m\mathbf{k}m|$s are all $\leq 0.1\sigma$ for $\Xi_3$ and $\Xi_4$, where $\sigma$ is the standard deviation of $\Xi_\mathbf{k}$ (SI Dataset S1). Assuming $m\mathbf{k}m=0$, for the even ordinal central moment kernel distribution, the average probability density on the left side of zero is greater than that on the right side, since $\frac{\frac{1}{2}}{\binom{\mathbf{k}}{2}^{-1}\left(Q\left(0\right)-Q\left(1\right)\right)^\mathbf{k}}>\frac{\frac{1}{2}}{\frac{1}{\mathbf{k}}\left(Q\left(0\right)-Q\left(1\right)\right)^\mathbf{k}}$. This means that, on average, the inequality $f(Q(\epsilon))\geq f(Q(1-\epsilon))$ holds. For the odd ordinal distribution, the discussion is more challenging since it is generally symmetric. Just consider $\Xi_3$, let $x_1=Q(p_i)$ and $x_3=Q(p_j)$, changing the value of $x_2$ from $Q(p_i)$ to $Q(p_j)$ will monotonically change the value of $\psi_3(x_1,x_2,x_3)$, since $\frac{\partial \psi_3(x_1,x_2,x_3)}{\partial x_2}=-\frac{x_1^2}{2}-x_1 x_2+2 x_1 x_3+x_2^2-x_2 x_3-\frac{x_3^2}{2}$, $-\frac{3}{4}\left(x_1-x_3\right)^2\le\frac{\partial \psi_3(x_1,x_2,x_3)}{\partial x_2}\le-\frac{1}{2}\left(x_1-x_3\right)^2\le0$. If the original distribution is right-skewed, $\xi_\Delta$ will be left-skewed, so, for $\Xi_3$, the average probability density of the right side of zero will be greater than that of the left side, which means, on average, the inequality $f(Q(\epsilon))\le f(Q(1-\epsilon))$ holds. In all, the monotonic decreasing of the negative pairwise difference distribution guides the general shape of the $\mathbf{k}$th central moment kernel distribution, $\mathbf{k}>2$, forcing it to be unimodal-like with the mode and median close to zero, then, the inequality $f(Q(\epsilon))\le f(Q(1-\epsilon))$ or $f(Q(\epsilon))\geq f(Q(1-\epsilon))$ holds in general. If a distribution is $\nu$th $\gamma$-ordered and all of its central moment kernel distributions are also $\nu$th $\gamma$-ordered, it is called completely $\nu$th $\gamma$-ordered. 


Another crucial property of the central moment kernel distribution, location invariant, is introduced in the next theorem. 
\begin{theorem}\label{kkd} $\psi_\mathbf{k}\left(x_1=\lambda x_1+\mu,\Compactcdots,x_\mathbf{k}=\lambda x_\mathbf{k}+\mu\right)=\lambda^\mathbf{k}\psi_\mathbf{k}\left(x_1,\Compactcdots,x_\mathbf{k}\right)$.
\end{theorem}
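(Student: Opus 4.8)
The plan is to reduce the single affine identity to two independent and much simpler facts — homogeneity under scaling and invariance under a common shift — and then dispatch each separately. I would write the substitution as two successive steps, $\psi_\mathbf{k}(\lambda x_1+\mu,\ldots,\lambda x_\mathbf{k}+\mu)=\psi_\mathbf{k}(\lambda x_1,\ldots,\lambda x_\mathbf{k})=\lambda^\mathbf{k}\psi_\mathbf{k}(x_1,\ldots,x_\mathbf{k})$, where the first equality is the claim that a common shift $\mu$ leaves $\psi_\mathbf{k}$ unchanged and the second is the claim that $\psi_\mathbf{k}$ is homogeneous of degree $\mathbf{k}$. The theorem follows once both are established.

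The homogeneity claim is immediate from Heffernan's explicit formula. Every monomial appearing in $\psi_\mathbf{k}$ has total degree exactly $\mathbf{k}$: in the generic term $x_{i_1}^{\mathbf{k}-j}x_{i_2}\cdots x_{i_{j+1}}$ the leading variable carries exponent $\mathbf{k}-j$ while the remaining $j$ variables each carry exponent $1$, for a total of $(\mathbf{k}-j)+j=\mathbf{k}$, and the trailing term $x_1\cdots x_\mathbf{k}$ likewise has degree $\mathbf{k}$. Hence $\psi_\mathbf{k}$ is homogeneous of degree $\mathbf{k}$, so $\psi_\mathbf{k}(\lambda x_1,\ldots,\lambda x_\mathbf{k})=\lambda^\mathbf{k}\psi_\mathbf{k}(x_1,\ldots,x_\mathbf{k})$ with no further work.

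The substantive content is therefore the shift invariance $\psi_\mathbf{k}(x_1+\mu,\ldots,x_\mathbf{k}+\mu)=\psi_\mathbf{k}(x_1,\ldots,x_\mathbf{k})$. The cleanest route exploits the defining property of $\psi_\mathbf{k}$: by Halmos, Hoeffding, and Heffernan it is the unique symmetric kernel on $\mathbf{k}$ arguments whose $U$-statistic unbiasedly estimates the functional $\mu_\mathbf{k}(F)=\mathbb{E}_F[(X-\mathbb{E}_F X)^\mathbf{k}]$, i.e.\ $\mathbb{E}_F[\psi_\mathbf{k}(X_1,\ldots,X_\mathbf{k})]=\mu_\mathbf{k}(F)$ for every $F$ with a finite $\mathbf{k}$th moment. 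Now $\mu_\mathbf{k}$ is itself location invariant, since centering kills the shift: $(X+\mu)-\mathbb{E}_F[X+\mu]=X-\mathbb{E}_F X$. Setting $q(x_1,\ldots,x_\mathbf{k}):=\psi_\mathbf{k}(x_1+\mu,\ldots,x_\mathbf{k}+\mu)$, which is again a symmetric polynomial of degree $\mathbf{k}$, I would compute $\mathbb{E}_F[q(X_1,\ldots,X_\mathbf{k})]=\mu_\mathbf{k}(\text{law of }X+\mu)=\mu_\mathbf{k}(F)$ for every $F$; so $q$ is a symmetric unbiased kernel for the same functional, and uniqueness forces $q=\psi_\mathbf{k}$ as polynomials.

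The one place demanding care — and the expected main obstacle — is the uniqueness (completeness) step: I must know that a symmetric polynomial whose expectation vanishes under every admissible $F$ is the zero polynomial, so that two unbiased symmetric kernels must coincide. I would secure this by letting $F$ range over finite mixtures of point masses at arbitrary locations and reading off that every coefficient of $q-\psi_\mathbf{k}$ vanishes, or simply invoke the completeness already cited in the text. Should a fully self-contained, purely algebraic argument be preferred, shift invariance is equivalent to $\psi_\mathbf{k}$ lying in the kernel of the total-shift operator $\sum_{i=1}^{\mathbf{k}}\partial_{x_i}$; verifying $\sum_{i=1}^{\mathbf{k}}\partial_{x_i}\psi_\mathbf{k}=0$ directly from the combinatorial formula is elementary but bookkeeping-heavy, as one must track how the derivative of each monomial cancels against those of its neighbors across the nested summations, and this cancellation is the genuine computational heart of that alternative.
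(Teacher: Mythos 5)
Your proof is correct, but it takes a genuinely different route from the paper's. You split the affine identity into degree-$\mathbf{k}$ homogeneity, which is immediate because every monomial of Heffernan's kernel has total degree $\mathbf{k}$, and shift invariance, which you obtain functionally: the shifted kernel $q(x_1,\ldots,x_\mathbf{k})=\psi_\mathbf{k}(x_1+\mu,\ldots,x_\mathbf{k}+\mu)$ is again a symmetric $\mathbf{k}$-argument kernel that is unbiased for the location-invariant functional $\mu_\mathbf{k}(F)$, so Halmos-type uniqueness forces $q=\psi_\mathbf{k}$ as polynomials. You correctly identify the completeness step as the only real obligation, and your plan to discharge it (test against finite mixtures of point masses and read off the multilinear coefficient in the mixture weights, which by symmetry equals $\mathbf{k}!\,(q-\psi_\mathbf{k})$ evaluated at the atoms) is the standard and valid argument; since $q-\psi_\mathbf{k}$ is a polynomial, vanishing on tuples of distinct reals extends to all of $\mathbb{R}^\mathbf{k}$. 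The paper instead proves the identity by direct computation: it partitions the terms of $\psi_\mathbf{k}$ into $\mathbf{k}$ groups, expands each transformed factor $(\lambda x_i+\mu)$ by the binomial theorem, and shows via two combinatorial summation identities (Lemmas 4 and 5 of the SI Text) that the summed coefficient of every monomial containing $\mu$ vanishes, leaving exactly $\lambda^{\mathbf{k}}\psi_\mathbf{k}$. Your argument is considerably shorter and explains \emph{why} the invariance must hold (the estimand itself is translation invariant), at the cost of importing the uniqueness of symmetric unbiased kernels over a sufficiently rich family of distributions; the paper's argument is longer and bookkeeping-heavy but purely algebraic and self-contained, requiring no appeal to unbiasedness or completeness.
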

\begin{proof}

Recall that for the $\mathbf{k}$th central moment, the kernel is $\psi_\mathbf{k}\left(x_1,\ldots,x_\mathbf{k}\right)=\sum_{j=0}^{\mathbf{k}-2}{\left(-1\right)^j\left(\frac{1}{\mathbf{k}-j}\right)\sum\left(x_{i_1}^{\mathbf{k}-j}x_{i_2}\ldots x_{i_{j+1}}\right)}+\left(-1\right)^{\mathbf{k}-1}\left(\mathbf{k}-1\right)x_1\ldots x_\mathbf{k}$, where the second summation is over $i_1,\ldots,i_{j+1}=1$ to $\mathbf{k}$ with $i_1\neq i_2\neq\ldots\neq i_{j+1}$ and $i_2<i_3<\ldots<i_{j+1}$ \cite{heffernan1997unbiased}.

$\psi_\mathbf{k}$ consists of two parts. The first part, $\sum_{j=0}^{\mathbf{k}-2}{\left(-1\right)^j\left(\frac{1}{\mathbf{k}-j}\right)\sum\left(x_{i_1}^{\mathbf{k}-j}x_{i_2}\ldots x_{i_{j+1}}\right)}$, involves a double summation over certain terms. The second part, $\left(-1\right)^{\mathbf{k}-1}\left(\mathbf{k}-1\right)x_1\ldots x_\mathbf{k}$, carries an alternating sign $\left(-1\right)^{\mathbf{k}-1}$ and involves multiplication of the constant $\mathbf{k}-1$ with the product of all the $x$ variables, $x_1x_2\ldots x_\mathbf{k}$. Consider each multiplication cluster $\left(-1\right)^j\left(\frac{1}{\mathbf{k}-j}\right)\sum\left(x_{i_1}^{\mathbf{k}-j}x_{i_2}\ldots x_{i_{j+1}}\right)$ for $j$ ranging from $0$ to $\mathbf{k}-2$ in the first part. Let each cluster form a single group. The first part can be divided into $\mathbf{k}-1$ groups. Combine this with the second part $\left(-1\right)^{\mathbf{k}-1}\left(\mathbf{k}-1\right)x_1\ldots x_\mathbf{k}$. Together, the terms of $\psi_\mathbf{k}$ can be divided into a total of $\mathbf{k}$ groups. From the $1$st to $\mathbf{k}-1$th group, the $g$th group has $\binom{\mathbf{k}}{g}\binom{g}{1}$ terms having the form $\left(-1\right)^{g+1}\frac{1}{\mathbf{k}-g+1}x_{i_1}^{\mathbf{k}-g+1}x_{i_2}\ldots x_{i_g}$. The final $\mathbf{k}$th group is the term $\left(-1\right)^{\mathbf{k}-1}\left(\mathbf{k}-1\right)x_1\Compactcdots x_\mathbf{k}$. 
There are two ways to divide $\psi_\mathbf{k}$ into $\mathbf{k}$ groups according to the form of each term. The first choice is, if $\mathbf{k}\neq g$, the $g$th group of $\psi_\mathbf{k}$ has $\binom{\mathbf{k}-l}{g-l}$ terms having the form $\left(-1\right)^{g+1}\frac{1}{\mathbf{k}-g+1}x_{i_1}^{\mathbf{k}-g+1}x_{i_2}\Compactcdots x_{i_l}x_{i_{l+1}}\ldots x_{i_g}$, where $x_{i_1}, x_{i_2},\Compactcdots,x_{i_l}$ are fixed, $x_{i_{l+1}},\Compactcdots,x_{i_{g}}$ are selected such that $i_{l+1},\Compactcdots,\ i_{g}\neq{i_1},{i_2},\Compactcdots,{i_l}$ and $i_{l+1}\neq\ldots\neq i_{g}$. Define another function $\Psi_\mathbf{k}\left(x_{i_1},x_{i_2},\Compactcdots,x_{i_l},x_{{i_{l+1}}},\Compactcdots,x_{i_{g}}\right)=\left(\lambda x_{i_1}+\mu\right)^{\mathbf{k}-g+1}\left(\lambda x_{i_2}+\mu\right)\Compactcdots\left(\lambda x_{i_l}+\mu\right)\left(\lambda x_{i_{l+1}}+\mu\right)\Compactcdots\left(\lambda x_{i_{g}}+\mu\right)$, the first group of $\Psi_\mathbf{k}$ is $\lambda^{\mathbf{k}}x_{i_1}\Compactcdots x_{i_l} x_{i_{l+1}}\Compactcdots x_{i_{g}}$, the $h$th group of $\Psi_\mathbf{k}$, $h>1$, has $\binom{\mathbf{k}-g+1}{\mathbf{k}-h-l+2}$ terms having the form $\lambda^{\mathbf{k}-h+1}\mu^{h-1}x_{i_1}^{\mathbf{k}-h-l+2}x_{i_2}\Compactcdots x_{i_l}$. Transforming $\psi_\mathbf{k}$ by $\Psi_\mathbf{k}$, then combing all terms with $\lambda^{\mathbf{k}-h+1}\mu^{h-1}x_{i_1}^{\mathbf{k}-h-l+2}x_{i_2}\Compactcdots x_{i_l}$, $\mathbf{k}-h-l+2>1$, the summed coefficient is ${S1}_l=\sum_{g=l}^{h+l-1}{\left(-1\right)^{g+1}\frac{1}{\mathbf{k}-g+1}\binom{\mathbf{k}-g+1}{\mathbf{k}-h-l+2}\binom{\mathbf{k}-l}{g-l}}=\sum_{g=l}^{h+l-1}{\left(-1\right)^{g+1}\frac{\left(\mathbf{k}-l\right)!}{\left(h+l-g-1\right)!\left(\mathbf{k}-h-l+2\right)!\left(g-l\right)!}}=0,$ since the summation is starting from $l$, ending at $h+l-1$, the first term includes the factor $g-l=0$, the final term includes the factor $h+l-g-1=0$, the terms in the middle are also zero due to the factorial property. 


Another possible choice is the $g$th group of $\psi_\mathbf{k}$ has $\left(\mathbf{k}-h\right)\binom{h-1}{g-\mathbf{k}+h-1}$ terms having the form 

$\left(-1\right)^{g+1}\frac{1}{\mathbf{k}-g+1}x_{i_1}x_{i_2}\Compactcdots x_{i_j}^{\mathbf{k}-g+1}\Compactcdots x_{i_{\mathbf{k}-h+1}}x_{i_{\mathbf{k}-h+2}}\Compactcdots x_{i_{g}}$, provided that $\mathbf{k}\neq g$, $2\le j\le \mathbf{k}-h+1$, where $x_{i_1},\ldots,x_{{i_{\mathbf{k}-h+1}}}$ are fixed, $x_{i_j}^{\mathbf{k}-g+1}$ and $x_{i_{\mathbf{k}-h+2}},\Compactcdots,x_{i_{g}}$ are selected such that ${i_{\mathbf{k}-h+2}},\Compactcdots,\ {i_{g}}\neq{i_1},{i_2},\Compactcdots,{i_{\mathbf{k}-h+1}}$ and ${i_{\mathbf{k}-h+2}}\neq\ldots\neq {i_{g}}$. Transforming these terms by $\Psi_\mathbf{k}\left(x_{i_1},x_{i_2},\ldots,x_{i_j},\ldots,x_{i_{\mathbf{k}-h+1}},x_{i_{\mathbf{k}-h+2}},\ldots,x_{i_{g}}\right)=\\\left(\lambda x_{i_1}+\mu\right)\left(\lambda x_{i_2}+\mu\right)\Compactcdots\left(\lambda x_{i_j}+\mu\right)^{\mathbf{k}-g+1}\Compactcdots\left(\lambda x_{i_{\mathbf{k}-h+1}}+\mu\right)\left(\lambda x_{i_{\mathbf{k}-h+2}}+\mu\right)\Compactcdots\left(\lambda x_{i_{g}}+\mu\right)$, then there are $\mathbf{k}-g+1$ terms having the form $\lambda^{\mathbf{k}-h+1}\mu^{h-1}x_{i_1}x_{i_2}\ldots x_{i_{\mathbf{k}-h+1}}$. Transforming the final $\mathbf{k}$th group of $\psi_\mathbf{k}$ by $\Psi_\mathbf{k}\left(x_1,\ldots,x_{\mathbf{k}}\right)=\left(\lambda x_1+\mu\right)\Compactcdots\left(\lambda x_{\mathbf{k}}+\mu\right)$, then, there is one term having the form $\left(-1\right)^{\mathbf{k}-1}\left(\mathbf{k}-1\right)\lambda^{\mathbf{k}-h+1}\mu^{h-1}x_1x_2\ldots x_{\mathbf{k}-h+1}$. Another possible combination is that the $g$th group of $\psi_\mathbf{k}$ contains $\left(g-\mathbf{k}+h-1\right)\binom{h-1}{g-\mathbf{k}+h-1}$ terms having the form $\left(-1\right)^{g+1}\frac{1}{\mathbf{k}-g+1}x_{i_1}x_{i_2}\Compactcdots x_{i_{\mathbf{k}-h+1}}x_{i_{\mathbf{k}-h+2}}\Compactcdots x_{i_j}^{\mathbf{k}-g+1}\Compactcdots x_{i_{g}}$. Transforming these terms by $\Psi_\mathbf{k}\left(x_{i_1},x_{i_2},\ldots,x_{i_{\mathbf{k}-h+1}},x_{i_{\mathbf{k}-h+2}},\ldots,x_{i_j},\ldots,x_{i_{g}}\right)=\\\left(\lambda x_{i_1}+\mu\right)\left(\lambda x_{i_2}+\mu\right)\Compactcdots\left(\lambda x_{i_{\mathbf{k}-h+1}}+\mu\right)\left(\lambda x_{i_{\mathbf{k}-h+2}}+\mu\right)\Compactcdots\left(\lambda x_{i_j}+\mu\right)^{\mathbf{k}-g+1}\Compactcdots\left(\lambda x_{i_{g}}+\mu\right)$, then there is only one term having the form $\lambda^{\mathbf{k}-h+1}\mu^{h-1}x_{i_1}x_{i_2}\ldots x_{i_{\mathbf{k}-h+1}}$. The above summation $S1_l$ should also be included, i.e., $x_{i_1}^{\mathbf{k}-h-l+2}=x_{i_1}$, $\mathbf{k}=h+l-1$. So, combing all terms with $\lambda^{\mathbf{k}-h+1}\mu^{h-1}x_{i_1}x_{i_2}\ldots x_{i_{\mathbf{k}-h+1}}$, according to the binomial theorem, the summed coefficient is $S2_l=\sum_{g=\mathbf{k}-h+1}^{\mathbf{k}-1}{\left(-1\right)^{g+1}\binom{h-1}{g-\mathbf{k}+h-1}\left(\mathbf{k}-h+1+\frac{g-\mathbf{k}+h-1}{\mathbf{k}-g+1}\right)}+\left(-1\right)^{\mathbf{k}-1}\left(\mathbf{k}-1\right)=\left(\mathbf{k}-h+1\right)\sum_{g=\mathbf{k}-h+1}^{\mathbf{k}-1}{\left(-1\right)^{g+1}\binom{h-1}{g-\mathbf{k}+h-1}}+\sum_{g=\mathbf{k}-h+1}^{\mathbf{k}-1}{\left(-1\right)^{g+1}\binom{h-1}{g-\mathbf{k}+h-1}\left(\frac{g-\mathbf{k}+h-1}{\mathbf{k}-g+1}\right)}+\left(-1\right)^{\mathbf{k}-1}\left(\mathbf{k}-1\right)=(-1)^\mathbf{k} (\mathbf{k}-h+1)+(h-2) (-1)^\mathbf{k}+\left(-1\right)^{\mathbf{k}-1}\left(\mathbf{k}-1\right)=0$. The summation identities required are $\sum_{g=\mathbf{k}-h+1}^{\mathbf{k}-1}{\left(-1\right)^{g+1}\binom{h-1}{g-\mathbf{k}+h-1}}=(-1)^\mathbf{k}$ and $\sum_{g=\mathbf{k}-h+1}^{\mathbf{k}-1}{\left(-1\right)^{g+1}\binom{h-1}{g-\mathbf{k}+h-1}\left(\frac{g-\mathbf{k}+h-1}{\mathbf{k}-g+1}\right)}=(h-2)(-1)^\mathbf{k}$. These two summation identities are proven in Lemma 4 and 5 in the SI Text.

Thus, no matter in which way, all terms including $\mu$ can be canceled out. The proof is complete by noticing that the remaining part is $\lambda^\mathbf{k}\psi_\mathbf{k}\left(x_1,\Compactcdots,x_\mathbf{k}\right)$.

\end{proof}

A direct result of Theorem \ref{kkd} is that, $\text{WHL}\mathbf{k}m$ after standardization is invariant to location and scale. So, the weighted H-L standardized $\mathbf{k}$th moment is defined to be
\begin{align*}
\text{WHL}s\mathbf{k}m_{\epsilon=\min{(\epsilon_1,\epsilon_2)},k_1,k_2,\gamma_1,\gamma_2 ,n}\coloneqq \frac{\text{WHL}\mathbf{k}m_{k_1,\epsilon_1,\gamma_1,n}}{(\text{WHL}var_{k_2,\epsilon_2,\gamma_2,n})^{\mathbf{k}/2}}
\text{.}
\end{align*}

To avoid confusion, it should be noted that the robust location estimations of the kernel distributions discussed in this paper differ from the approach taken by Joly and Lugosi (2016) \cite{joly2016robust}, which is computing the median of all $U$-statistics from different disjoint blocks. Compared to bootstrap median $U$-statistics, this approach can produce two additional kinds of finite sample bias, one arises from the limited numbers of blocks, another is due to the size of the $U$-statistics (consider the mean of all $U$-statistics from different disjoint blocks, it is definitely not identical to the original $U$-statistic, except when the kernel is the Hodges-Lehmann kernel). Laforgue, Clemencon, and Bertail (2019)'s median of randomized $U$-statistics \cite{laforgue2019medians} is more sophisticated and can overcome the limitation of the number of blocks, but the second kind of bias remains unsolved. 

\section*{Congruent Distribution}\label{cd} In the realm of nonparametric statistics, the relative differences, or orders, of robust estimators are of primary importance. A key implication of this principle is that when there is a shift in the parameters of the underlying distribution, all nonparametric estimates should asymptotically change in the same direction, if they are estimating the same attribute of the distribution. If, on the other hand, the mean suggests an increase in the location of the distribution while the median indicates a decrease, a contradiction arises. It is worth noting that such contradiction is not possible for any $LL$-statistics in a location-scale distribution, as explained in Theorem 2 and 18 in REDS I. However, it is possible to construct counterexamples to the aforementioned implication in a shape-scale distribution. In the case of the Weibull distribution, its quantile function is $Q_{Wei}\left(p\right)=\lambda  (-\ln (1-p))^{1/\alpha}$, where $0\leq p\leq1$, $\alpha>0$, $\lambda>0$, $\lambda$ is a scale parameter, $\alpha$ is a shape parameter, $\ln$ is the natural logarithm function. Then, $m=\lambda \sqrt[\alpha ]{\ln (2)}$, $\mu=\lambda \Gamma \left(1+\frac{1}{\alpha }\right)$, where $\Gamma$ is the gamma function. When $\alpha=1$, $m=\lambda \ln (2)\approx0.693\lambda$, $\mu=\lambda$, when $\alpha=\frac{1}{2}$, $m=\lambda \ln ^2(2)\approx0.480\lambda$, $\mu=2\lambda$, the mean increases as $\alpha$ changes from $1$ to $\frac{1}{2}$, but the median decreases. In the last section, the fundamental role of quantile average was demonstrated by using the method of classifying distributions through the signs of derivatives. To avoid such scenarios, this method can also be used. Let the quantile average function of a parametric distribution be denoted as $\text{QA}\left(\epsilon,\gamma,\alpha_1,\Compactcdots,\alpha_i,\Compactcdots,\alpha_k\right)$, where $\alpha_i$ represent the parameters of the distribution, then, a distribution is $\gamma$-congruent if and only if the sign of $\frac{\partial{\text{QA}}}{\partial{\alpha_i}}$ remains the same for all $0\leq\epsilon\leq\frac{1}{1+\gamma}$. If $\frac{\partial{\text{QA}}}{\partial{\alpha_i}}$ is equal to zero or undefined, it can be considered both positive and negative, and thus does not impact the analysis. A distribution is completely $\gamma$-congruent if and only if it is $\gamma$-congruent and all its central moment kernel distributions are also $\gamma$-congruent. Setting $\gamma=1$ constitutes the definitions of congruence and complete congruence. Replacing the QA with QHLM gives the definition of $\gamma$-$U$-congruence. Chebyshev's inequality implies that, for any probability distributions with finite second moments, as the parameters change, even if some $LL$-statistics change in a direction different from that of the population mean, the magnitude of the changes in the $LL$-statistics remains bounded compared to the changes in the population mean. Furthermore, distributions with infinite moments can be $\gamma$-congruent, since the definition is based on the quantile average, not the population mean.

The following theorems show the conditions that a distribution is congruent or $\gamma$-congruent.
\begin{theorem}\label{sscon}A symmetric distribution is always congruent and $U$-congruent.\end{theorem}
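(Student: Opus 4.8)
The plan is to show that symmetry forces the quantile average (at $\gamma=1$) to collapse to the center of symmetry for every admissible $\epsilon$, so that $\text{QA}$ is constant in $\epsilon$ and its partial derivative in any parameter $\alpha_i$ cannot change sign as $\epsilon$ varies. Since congruence is by definition the $\gamma=1$ case, the relevant range is $0\le\epsilon\le\frac{1}{2}$, and the quantile-average function is built from a lower quantile at level $\gamma\epsilon=\epsilon$ paired with an upper quantile at level $1-\epsilon$, i.e. $\text{QA}(\epsilon,1,\alpha_1,\Compactcdots,\alpha_k)=\frac{1}{2}\left(Q(\epsilon)+Q(1-\epsilon)\right)$.

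The congruence part then reduces to the defining identity of a symmetric distribution. A distribution symmetric about $m$ satisfies $Q(p)+Q(1-p)=2m$ for every $p\in(0,1)$, where $m=m(\alpha_1,\Compactcdots,\alpha_k)$ depends only on the parameters and not on $p$. Substituting $p=\epsilon$ gives $\text{QA}(\epsilon,1,\cdot)=m$ identically on $0\le\epsilon\le\frac{1}{2}$, so $\frac{\partial\text{QA}}{\partial\alpha_i}=\frac{\partial m}{\partial\alpha_i}$ is independent of $\epsilon$. Whatever that sign is (and if the derivative is zero or undefined, the definition lets us treat it as either sign), it is the same for all $\epsilon$, which is exactly congruence.

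For $U$-congruence the only change is to replace $Q$ by the quantile function $Q_H$ of the Hodges--Lehmann kernel distribution, the law of $(X+X')/2$ for independent copies $X,X'$. The key step is to verify that this kernel distribution is again symmetric about the same center $m$: writing $X=m+Z$ and $X'=m+Z'$ with $Z,Z'$ independent and symmetric about $0$, one has $(X+X')/2-m=(Z+Z')/2$, whose law is invariant under negation because $-(Z+Z')/2=((-Z)+(-Z'))/2$ and $(-Z,-Z')$ has the same joint distribution as $(Z,Z')$. Hence $Q_H(p)+Q_H(1-p)=2m$, and the reasoning of the previous paragraph applies verbatim to $\text{QHLM}(\epsilon,1,\cdot)=\frac{1}{2}\left(Q_H(\epsilon)+Q_H(1-\epsilon)\right)=m$, giving $U$-congruence.

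The statement is essentially a rigidity consequence of symmetry, so the computations are light; the one point needing a genuine (if short) argument is the closure property used above, that the Hodges--Lehmann kernel distribution of a symmetric distribution is symmetric about the same center, which the reflection argument settles and which is the degenerate location-analogue of the Hodges--Lehmann fact cited for Theorem \ref{scmkdti}. A secondary point to state carefully is that differentiability in $\alpha_i$ is not actually required: because $\text{QA}$ and $\text{QHLM}$ are \emph{exactly} constant in $\epsilon$, the sign condition holds irrespective of whether the parameter derivative exists, consistent with the convention that a zero or undefined derivative counts as both signs. I do not expect any essential obstacle beyond these bookkeeping remarks.
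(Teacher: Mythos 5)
Your argument is correct and follows essentially the same route as the paper: the paper's proof simply cites Theorems 2 and 18 of REDS I for the fact that all symmetric quantile averages and all SQHLMs of a symmetric distribution coincide (with the center of symmetry), and then concludes exactly as you do that a quantity constant in $\epsilon$ cannot have a sign-changing parameter derivative. The only difference is that you prove inline what the paper delegates to REDS I --- the identity $Q(p)+Q(1-p)=2m$ and the reflection argument showing the Hodges--Lehmann kernel distribution is symmetric about the same center --- which makes your version self-contained but not a different proof.
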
\begin{proof}As shown in Theorem 2 and Theorem 18 in REDS I, for any symmetric distribution, all symmetric quantile averages and all SQHLMs conincide. The conclusion follows immediately.\end{proof}

\begin{theorem}\label{lcscon}A positive definite location-scale distribution is always $\gamma$-congruent.\end{theorem}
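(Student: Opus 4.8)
The plan is to exploit the affine dependence of every quantile on the location and scale parameters. Writing the quantile function of a location-scale distribution as $Q(p)=\mu+\sigma Q_0(p)$, where $Q_0$ is the reduced (standardized) quantile function, $\mu$ is the location parameter and $\sigma>0$ the scale parameter, the two parameters $\alpha_i$ to be differentiated are exactly $\mu$ and $\sigma$. First I would observe that the quantile average is a fixed, positive-weight average of the two trimmed quantiles $Q(\gamma\epsilon)$ and $Q(1-\epsilon)$, taken over the admissible range $0\le\epsilon\le\frac{1}{1+\gamma}$ that keeps $\gamma\epsilon\le 1-\epsilon$. Substituting the location-scale form shows that the quantile average inherits this affine structure, $\text{QA}(\epsilon,\gamma,\mu,\sigma)=\mu+\sigma\,G(\epsilon,\gamma)$, where $G(\epsilon,\gamma)=\frac{1}{2}\left(Q_0(\gamma\epsilon)+Q_0(1-\epsilon)\right)$ depends only on the reduced distribution and not on $\mu$ or $\sigma$.

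The next step is to read off the two partial derivatives and verify that each keeps a constant sign across the whole range of $\epsilon$. For the location parameter, $\frac{\partial \text{QA}}{\partial\mu}=1>0$ identically, so its sign is trivially constant and positive, independent of any further assumption. For the scale parameter, $\frac{\partial \text{QA}}{\partial\sigma}=G(\epsilon,\gamma)$, so the entire question reduces to the sign of $G$. This is where positive definiteness enters: it forces $Q_0(p)>0$ for every $p\in(0,1)$, whence $G(\epsilon,\gamma)$ is a positive-weight combination of strictly positive quantile values and is therefore strictly positive for all admissible $\epsilon$. Should $G$ vanish or be undefined at an endpoint, the convention accompanying the definition of congruence, whereby a zero or undefined derivative counts as both signs, absorbs that case, so no boundary obstruction arises.

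Putting the two computations together, both $\frac{\partial \text{QA}}{\partial\mu}$ and $\frac{\partial \text{QA}}{\partial\sigma}$ retain a single sign over $0\le\epsilon\le\frac{1}{1+\gamma}$, which is precisely the defining condition for $\gamma$-congruence. I expect the only delicate point to be isolating the role of the hypothesis: since the location derivative is automatically sign-constant, positive definiteness is doing all of its work on the scale derivative, guaranteeing that the standardized quantile average cannot flip sign as $\epsilon$ sweeps its range, in contrast to a sign-straddling support, where $G$ could change sign and congruence could fail. If one prefers to treat the quantile average as a general positive-weight $L$-functional rather than a two-point average, the argument is unchanged, because any positive-weight average of strictly positive reduced quantiles is again strictly positive.
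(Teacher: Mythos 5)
Your proof is correct and follows essentially the same route as the paper: both exploit the affine form $\text{QA}=\mu+\sigma\,\mathrm{QA}_0(\epsilon,\gamma)$ for a location-scale family and conclude that the derivative in $\mu$ is identically $1$ while the derivative in $\sigma$ is the reduced quantile average, which positive definiteness keeps strictly positive. If anything, you make explicit the role of the positive-definiteness hypothesis, which the paper's one-line proof leaves implicit.
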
\begin{proof}As shown in Theorem 2, for a location-scale distribution, any quantile average can be expressed as  $\lambda \mathrm{QA}_{0}(\epsilon,\gamma)+\mu$. Therefore, the derivatives with respect to the parameters $\lambda$ or $\mu$ are always positive. By application of the definition, the desired outcome is obtained.\end{proof}

For the Pareto distribution, $\frac{\partial{Q}}{\partial{\alpha}}=\frac{x_m(1-p)^{-1/\alpha} \ln (1-p)}{\alpha^2}$. Since $\ln (1-p)<0$ for all $0<p<1$, $(1-p)^{-1/\alpha}>0$ for all $0<p<1$ and $\alpha>0$, so $\frac{\partial{Q}}{\partial{\alpha}}<0$, and therefore $\frac{\partial{\text{QA}}}{\partial{\alpha}}<0$, the Pareto distribution is $\gamma$-congruent. It is also $\gamma$-$U$-congruent, since $\gamma m$HLM can also express as a function of ${Q}(p)$. For the lognormal distribution, $\frac{\partial{\text{QA}}}{\partial{\sigma}}=\frac{1}{2} \biggl(\sqrt{2} \text{erfc}^{-1}(2\gamma \epsilon) \biggl(-e^{\frac{\sqrt{2} \mu-2 \sigma \text{erfc}^{-1}(2 \gamma\epsilon )}{\sqrt{2}}}\biggr)+\biggl(-\sqrt{2}\biggr) \text{erfc}^{-1}(2 (1-\epsilon)) e^{\frac{\sqrt{2} \mu-2 \sigma \text{erfc}^{-1}(2 (1-\epsilon))}{\sqrt{2}}}\biggr) $. Since the inverse complementary error function is positive when the input is smaller than 1, and negative when the input is larger than 1, and symmetry around 1, if $0\leq\gamma\leq1$, $\text{erfc}^{-1}(2 \gamma\epsilon)\geq-\text{erfc}^{-1}(2-2 \epsilon)$, $e^{\mu-\sqrt{2} \sigma \text{erfc}^{-1}(2-2 \epsilon)}>e^{\mu-\sqrt{2} \sigma \text{erfc}^{-1}(2 \gamma\epsilon)}$. Therefore, if $0\leq\gamma\leq1$, $\frac{\partial{\text{QA}}}{\partial{\sigma}}>0$, the lognormal distribution is $\gamma$-congruent. Theorem \ref{sscon} implies that the generalized Gaussian distribution is congruent and $U$-congruent. For the Weibull distribution, when $\alpha$ changes from 1 to $\frac{1}{2}$, the average probability density on the left side of the median increases, since $\frac{\frac{1}{2}}{\lambda \ln (2)}<\frac{\frac{1}{2}}{\lambda \ln ^2(2)}$, but the mean increases, indicating that the distribution is more heavy-tailed, the probability density of large values will also increase. So, the reason for non-congruence of the Weibull distribution lies in the simultaneous increase of probability densities on two opposite sides as the shape parameter changes: one approaching the bound zero and the other approaching infinity. Note that the gamma distribution does not have this issue, Numerical results indicate that it is likely to be congruent.

The next theorem shows an interesting relation between congruence and the central moment kernel distribution.
\begin{theorem}\label{hhhh1}The second central moment kernal distribution derived from a continuous location-scale unimodal distribution is always $\gamma$-congruent.\end{theorem}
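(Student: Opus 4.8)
The plan is to reduce the $\gamma$-congruence of the second central moment kernel distribution $\Xi_2$ to a single positivity statement, by first pinning down exactly how $\Xi_2$ depends on the underlying location and scale. Writing the underlying distribution as $X=\lambda Z+\mu$ with $Z$ a fixed ``standard'' unimodal variable, I would invoke Theorem \ref{kkd} specialized to $\mathbf{k}=2$ (equivalently, a one-line expansion of $\psi_2(x_1,x_2)=\frac{1}{2}(x_1-x_2)^2$) to get $\psi_2(\lambda z_1+\mu,\lambda z_2+\mu)=\lambda^2\psi_2(z_1,z_2)$. Hence the kernel distribution built from $X$ is exactly $\lambda^2$ times the kernel distribution $\Xi_2^{(0)}$ built from $Z$: the location $\mu$ drops out entirely, and the quantile function of $\Xi_2$ factors as $Q_{\Xi_2}(p)=\lambda^2\,Q_{\Xi_2^{(0)}}(p)$, where $Q_{\Xi_2^{(0)}}$ is free of both $\mu$ and $\lambda$. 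In this sense $\Xi_2$ is a pure scale family (scale $\lambda^2$, location fixed at $0$), so the claim is essentially the scale-family specialization of Theorem \ref{lcscon}.

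Next I would push this through the definition of $\gamma$-congruence. Since every admissible quantile average is a fixed nonnegatively-weighted combination of quantiles $Q_{\Xi_2}(p)$ (with uppermost argument $1-\epsilon$), the scaling commutes with the average and yields $\text{QA}(\epsilon,\gamma,\mu,\lambda)=\lambda^2\,\text{QA}_0(\epsilon,\gamma)$, where $\text{QA}_0$ is the corresponding quantile average of $\Xi_2^{(0)}$. Differentiating in the two parameters gives $\frac{\partial \text{QA}}{\partial \mu}=0$, which by the stated convention counts as both signs and is irrelevant, and $\frac{\partial \text{QA}}{\partial \lambda}=2\lambda\,\text{QA}_0(\epsilon,\gamma)$. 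So $\gamma$-congruence reduces to showing that $\text{QA}_0(\epsilon,\gamma)$ has constant sign over $0\le\epsilon\le\frac{1}{1+\gamma}$.

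The decisive step is the positivity $\text{QA}_0(\epsilon,\gamma)>0$ for all admissible $\epsilon$, which I would obtain from the nonnegativity of the kernel: because $\psi_2(x_1,x_2)=\frac{1}{2}(x_1-x_2)^2\ge0$, the distribution $\Xi_2^{(0)}$ is supported on $[0,\infty)$, every quantile $Q_{\Xi_2^{(0)}}(p)$ is nonnegative, and the uppermost quantile $Q_{\Xi_2^{(0)}}(1-\epsilon)$ entering the average is strictly positive for $\epsilon<1$. A nonnegatively-weighted average containing a strictly positive term is strictly positive, so $\frac{\partial \text{QA}}{\partial \lambda}=2\lambda\,\text{QA}_0>0$ holds uniformly in $\epsilon$ (recall $\lambda>0$). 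Together with $\frac{\partial \text{QA}}{\partial \mu}=0$, the definition of $\gamma$-congruence is met, which is the desired conclusion. The continuity and unimodality hypotheses enter only to guarantee, via Theorem \ref{scmkdti}, that $\Xi_2^{(0)}$ is a genuine continuous distribution with a well-defined quantile function and hence a well-defined $\text{QA}$.

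The main obstacle I anticipate is structural rather than computational: one must be sure the quantile average really factors as $\lambda^2\,\text{QA}_0$, i.e. that taking a quantile average commutes with the map $w\mapsto\lambda^2 w$. This is immediate because order-preserving maps carry $p$-quantiles to $p$-quantiles, but it is the one place the argument would collapse if the scale factor were negative or the map non-monotone; the square in $\psi_2$ is precisely what forces the factor $\lambda^2>0$ and thus monotonicity. A secondary point to check cleanly is the boundary $\epsilon=\frac{1}{1+\gamma}$ together with the degenerate possibility that the average collapses onto $Q_{\Xi_2^{(0)}}(0)=0$; confirming that the upper quantile always carries positive weight rules this out and secures the strict sign.
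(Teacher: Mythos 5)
Your proposal is correct and follows essentially the same route as the paper's proof: Theorem \ref{kkd} makes $\Xi_2$ a (positively supported) location-scale family in the underlying parameters, and then the positive-definite location-scale congruence result (Theorem \ref{lcscon}, i.e.\ Theorem 12 of REDS I) gives $\gamma$-congruence. The only cosmetic differences are that you re-derive that last step inline by differentiating $\lambda^2\,\text{QA}_0$ rather than citing it, and you obtain positivity directly from $\psi_2=\frac{1}{2}(x_1-x_2)^2\ge 0$ rather than via Theorem \ref{scmkdti}.
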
\begin{proof}Theorem \ref{kkd} shows that the central moment kernel distribution generated from a location-scale distribution is also a location-scale distribution. Theorem \ref{scmkdti} shows that it is positively definite. Implementing Theorem 12 in REDS 1 yields the desired result.\end{proof}

Although some parametric distributions are not congruent, as shown in REDS 1. In REDS 1, Theorem 12 establishes that $\gamma$-congruence always holds for a positive definite location-scale family distribution and thus for the second central moment kernel distribution generated from a location-scale unimodal distribution as shown in Theorem \ref{hhhh1}. Theorem \ref{tfkumo} demonstrates that all central moment kernel distributions are unimodal-like with mode and median close to zero, as long as they are generated from unimodal distributions. Assuming finite moments and constant $Q(0)-Q(1)$, increasing the mean of a distribution will result in a generally more heavy-tailed distribution, i.e., the probability density of the values close to $Q(1)$ increases, since the total probability density is 1. In the case of the $\mathbf{k}$th central moment kernel distribution, $\mathbf{k}>2$, while the total probability density on either side of zero remains generally constant as the median is generally close to zero and much less impacted by increasing the mean, the probability density of the values close to zero decreases as the mean increases. This transformation will increase nearly all symmetric weighted averages, in the general sense. Therefore, except for the median, which is assumed to be zero, nearly all symmetric weighted averages for all central moment kernel distributions derived from unimodal distributions should change in the same direction when the parameters change. 

\vspace*{-10pt}
\section*{Discussion}
Moments, including raw moments, central moments, and standardized moments, are the most common parameters that describe probability distributions. Central moments are preferred over raw moments because they are invariant to translation. In 1947, Hsu and Robbins proved that the arithmetic mean converges completely to the population mean provided the second moment is finite \cite{hsu1947complete}. The strong law of large numbers (proven by Kolmogorov in 1933) \cite{kolmogorov1933sulla} implies that the $\mathbf{k}$th sample central moment is asymptotically unbiased. Recently, fascinating statistical phenomena regarding Taylor's law for distributions with infinite moments have been discovered by Drton and Xiao (2016) \cite{drton2016wald}, Pillai and Meng (2016) \cite{pillai2016unexpected}, Cohen, Davis, and Samorodnitsky (2020) \cite{cohen2020heavy}, and Brown, Cohen, Tang, and Yam (2021) \cite{brown2021taylor}. Lindquist and Rachev (2021) raised a critical question in their inspiring comment to Brown et al's paper \cite{brown2021taylor}: "What are the proper measures for the location, spread, asymmetry, and dependence (association) for random samples with infinite mean?" \cite{lindquist2021taylor}. From a different perspective, this question closely aligns with the essence of Bickel and Lehmann's open question in 1979 \cite{bickel2012descriptive}. They suggested using median, interquartile range, and medcouple \cite{brys2004robust} as the robust versions of the first three moments. While answering this question is not the focus of this paper, it is almost certain that the estimators proposed in this paper will have a place. Since the estimation of central moments can be transformed into the location estimation of a pseudosample, according to the general principle of central limit theorem, the optimal estimator should always has a combinatorial pseudosample size, which explains, in another aspect, why the theory of $U$-statistics allows a minimum-variance unbiased estimator to be derived from each unbiased estimator of an estimable parameter. Similar to the robust version of L-moment \cite{hosking1990moments} being trimmed L-moment \cite{elamir2003trimmed}, central moments now also have their robust nonparametric version, weighted Hodges-Lehmann central moments, based on the complete $U$-congruence of the underlying distribution.

\showmatmethods{} 

\vspace*{-3pt}
\subsection*{Software Availability} 
The codes used to compute the weighted H-L $\mathbf{k}$th central moment have been deposited in \href {https://github.com/johon-lituobang/REDS_Central_Moments} {github.com/johon-lituobang/REDS}.
\vspace*{-3pt}
\acknow{I sincerely acknowledge the insightful comments from Peter Bickel, which considerably elevating the lucidity and merit of this paper. I am also grateful to Ruodu Wang for pointing out important mistakes regarding the $\gamma$-symmetric distribution.}

\showacknow{} 

\bibliography{pnas-sample}

\end{document}